\newtheorem{Thm}{Theorem}{\bfseries}{\itshape}
\newtheorem*{Thm*}{Theorem}{\bfseries}{\itshape}
\newtheorem{Cor}{Corollary}{\bfseries}{\itshape}
\newtheorem{Prop}[Cor]{Proposition}{\bfseries}{\itshape}
\newtheorem{Lem}[Cor]{Lemma}{\bfseries}{\itshape}
\newtheorem*{Lem*}{Lemma}{\bfseries}{\itshape}
{\bfseries}{\itshape}
{\bfseries}{\itshape}
{\bfseries}{\rmfamily}
{\scshape}{\rmfamily}
\newtheorem{Rem}[Cor]{Remark}{\scshape}{\rmfamily}
{\scshape}{\rmfamily}
{\bfseries}{\itshape}
\renewcommand\ge{\geqslant} \renewcommand\le{\leqslant}
\let\tildeaccent=\~ \let\hataccent=\^
\renewcommand\~[1]{\widetilde{#1}}
\def\<{\left<} \def\>{\right>} \def\({\left(} \def\){\right)}
\let\polishL=l \def\Zoladek.{\.Zol\c adek}
\def\etc.{\emph{etc}.}
\def\:{\colon}     \def\Q{{\mathbb Q}}
\let\PolishL=\L % remember polish L
\def\L{{\mathbb L}}
 \def\e{\varepsilon}
 \def\Lojas.{\PolishL ojasiewicz}
\def\NN{\mathbb{N}} \def\ZZ{\mathbb{Z}} \def\QQ{\mathbb{Q}}
 \def\PP{\mathbb{P}}
\author%[Cluckers]
{Raf Cluckers}
\address{Raf Cluckers, Univ.~Lille, CNRS, UMR 8524 - Laboratoire Paul Painlev\'e, F-59000 Lille, France, and,
KU Leuven, Department of Mathematics, B-3001 Leu\-ven, Bel\-gium}
\email{Raf.Cluckers@univ-lille.fr}
\urladdr{https://rcluckers.perso.math.cnrs.fr/}
\author%[Glazer]
{Itay Glazer}
\address{Itay Glazer, Department of Mathematics, University of Oxford, Andrew Wiles Building, Radcliffe Observatory
Quarter (550), Woodstock Road, Oxford, OX2 6GG, UK}
\email{itayglazer@gmail.com}
\urladdr{https://sites.google.com/view/itay-glazer}
\thanks{
%*Raf Cluckers, Univ.~Lille, CNRS, UMR 8524 - Laboratoire Paul Painlev\'e, F-59000 Lille, France, and,
%KU Leuven, Department of Mathematics, B-3001 Leu\-ven, Bel\-gium.\\
%**Itay Glazer, Department of Mathematics, University of Oxford, Andrew Wiles Building, Radcliffe Observatory
%Quarter (550), Woodstock Road, Oxford, OX2 6GG, UK.\\
The authors would like to thank Wouter Castryck, Pierre D\`ebes and Floris Vermeulen for interesting discussions related to the lower bounds of this note.
The author R.\ C. was partially supported by KU Leuven IF C16/23/010 and the Labex CEMPI (ANR-11-LABX-0007-01).}
\title[Eventual tightness of projective dimension growth]{Eventual tightness of projective dimension growth bounds: quadratic in the degree}
\begin{document}

\subjclass[2020]{Primary 11D45; Secondary 14G05, 11G35}
\keywords{dimension growth conjecture, rational points of bounded height, diophantine equations, determinant method, tight bounds}

\begin{abstract}
In projective dimension growth results, one bounds the number of rational points of height at most $H$ on an irreducible hypersurface in $\mathbb P^n$ of degree $d>3$ by $C(n)d^2 H^{n-1}(\log H)^{M(n)}$, where the quadratic dependence in $d$ has been recently obtained by Binyamini, Cluckers and Kato in 2024 \cite{BinCluKat}. For these bounds, it was already shown by Castryck, Cluckers, Dittmann and Nguyen in 2020 \cite{CCDN-dgc} that one cannot do better than a linear dependence in $d$. In this paper we show that, for the mentioned projective dimension growth bounds, the quadratic dependence in $d$ is eventually tight when $n$ grows. More precisely the upper bounds cannot be better than $c(n)d^{2-2/n} H^{n-1}$ in general.
%We furthermore describe how large $n$ must be, given $\varepsilon$.
Note that for affine dimension growth (for affine hypersurfaces of degree $d$, satisfying some extra conditions), the dependence on $d$ is also quadratic by \cite{BinCluKat}, which is already known to be optimal by \cite{CCDN-dgc}. Our projective case thus complements the picture of tightness for dimension growth bounds for hypersurfaces.
\end{abstract}
\maketitle

%\appendix
%\section{Eventual tightness of projective dimension growth\\ By Raf Cluckers* and Itay Glazer**}
%\section{}
%\label{sec:lowerbounds}

In this short paper we prove the eventual tightness of the projective dimension growth upper bounds for hypersurfaces of Theorem %\ref{thm:main:dg}
4 of \cite{BinCluKat}, see Theorem \ref{cor:lowerb} and Proposition \ref{prop:lowerb:en} below. Here, the word eventual indicates that the dimension of the projective space needs to grow.
%, and only then the bounds from Theorem %\ref{thm:main:dg}
%4 of \cite{BinCluKat} become tight up to a factor $d^{\varepsilon}$ and up to logarithmic
%factors $\log H$, where $d$ is the degree and $H$ the height. In Proposition \ref{prop:lowerb:en} we make precise how large we have to take $n$, given $\varepsilon$.

% and see also Theorem \ref{thm:dg} which recalls the projective dimension growth result from \cite{BinCluKat} as well as lower bounds from \cite{CCDN-dgc}.

Let us first recall the projective dimension growth bounds, with both the current state of the art for upper bounds (quadratic in $d$, from Theorem %\ref{thm:main:dg}
4 of \cite{BinCluKat})\footnote{The upper bounds from (\ref{eq:dgu}) are complemented in the case $d=3$ by \cite{Salb:d3,Salberger-dgc} and in the case $d=2$ by \cite{Heath-Brown-Ann}. Note that there are variants to (\ref{eq:dgu}) with upper bounds without any logarithmic factors when $d>4$ (but with a higher power of $d$) in %For the state of the art for degree $d=3$, see
 \cite{CCDN-dgc,CHNV-dgc,Pared-Sas,Vermeulen:p}. These variants also treat more general global fields than $\QQ$ and the situation with $X$ of general codimension.} %, and, except for \cite{CCDN-dgc}, they also work over
 and lower bounds for the worst case (linear in $d$, from \cite[Section 6]{CCDN-dgc}).
%For simplicity we work with $\QQ$-rational points and varieties over $\QQ$, but the generalization to any global field $K$ is straightforward.
%

For $X\subset\PP_{\QQ}^{n}$ and any $H\ge1$, write $X(\QQ,H)$ for the set of rational points $x$ on $X$ that can be written with integer homogeneous coordinates $(x_0,\ldots,x_n)$ where the integers $x_i$ satisfy $|x_i| \le H$ for all $i$.
\begin{Thm}[Projective dimension growth, \cite{BinCluKat} and \cite{CCDN-dgc}]\label{thm:dg}
Let $n>1$ be given. There are constants $c=c(n)$ and $\kappa=\kappa(n)$ such that for any $d\ge4$, any integral hypersurface $X$ of degree $d$ in $\PP_{\QQ}^{n}$ and
any $H>2$ one has
\begin{equation}\label{eq:dgu}
    \# X(\QQ,H) \le c d^{2}H^{\dim X}(\log H)^\kappa.
  \end{equation}
Furthermore, there is a constant $c'=c'(n)>0$ such that for arbitrarily large values of $d$ and $H$ there exists a geometrically integral hypersurface $X$ of degree $d$ in $\PP_{\QQ}^{n}$ such that
  \begin{equation}\label{eq:dgl}
 c' d H^{\dim X} \le  \# X(\QQ,H).
  \end{equation}
% where $X(K,H)$ denotes the set of $K$-rational points on $X$ with
% absolute projective multiplicative height at most $H$.  (as
% recalled in Section 2.1 of \cite{Pared-Sas}).
\end{Thm}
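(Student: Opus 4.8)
The plan is to assemble the statement from its two halves, which live in different papers. \emph{The upper bound \eqref{eq:dgu}} is Theorem~4 of \cite{BinCluKat}, and to reprove it I would run the determinant method of Bombieri--Pila, Heath-Brown and Salberger while keeping all constants explicitly polynomial in $d$: for each relevant real or $p$-adic scale one extracts, from the integral points of height $\le H$ on $X$, an auxiliary hypersurface (or a bounded family of them) of degree $O_n(1)$ not containing $X$, so that the points in question lie on the lower-dimensional intersections with $X$, and one then induces on dimension — the ``dimension growth'' induction — with base case the count of rational points of bounded height on an irreducible curve. That base case already contributes one factor of $d$, and the new input of \cite{BinCluKat} is that the remaining bookkeeping costs only one further factor of $d$ (rather than a higher power, as in earlier treatments of the method), at the price of the factor $(\log H)^{\kappa(n)}$; I would simply invoke this.

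\emph{The lower bound \eqref{eq:dgl}} I would prove by exhibiting an explicit geometrically integral hypersurface with many visible rational points, essentially as in \cite[Section~6]{CCDN-dgc}. A convenient choice is
\[
  X_d:\qquad \prod_{i=1}^{d}(x_1-i\,x_0)\;=\;c\,\prod_{j=1}^{d}(x_2-j\,x_0)
\]
for a suitable $c\in\QQ^\times$ (it involves only $x_0,x_1,x_2$, so for $n\ge3$ it is the cone over a plane curve with apex $\{x_0=x_1=x_2=0\}$). For every $c$ and every pair $1\le i,j\le d$, both sides vanish identically on the codimension-two linear subspace $Y_{i,j}=\{x_1=i\,x_0,\ x_2=j\,x_0\}\cong\PP^{n-2}$, so $X_d$ contains the union of these $d^2$ subspaces. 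When $n\ge3$, the primitive integer points of height $\le H$ on $Y_{i,j}$ are those with $\max(i,j)\,|x_0|\le H$ and $|x_3|,\dots,|x_n|\le H$, hence $\asymp H^{n-1}/\max(i,j)$ of them, and
\[
  \sum_{i,j=1}^{d}\frac1{\max(i,j)}\;=\;\sum_{m=1}^{d}\frac{2m-1}{m}\;=\;2d-O(\log d)\;\asymp\;d .
\]
Since two distinct $Y_{i,j}$ meet only inside the fixed $\PP^{n-3}=\{x_0=x_1=x_2=0\}$, the pairwise overlaps cost only $O(d^{4}H^{n-2})$, so taking $H$ large in terms of $d$ gives $\#X_d(\QQ,H)\ge c'(n)\,dH^{\dim X}$; for $n=2$ one instead notes that $X_d$ contains the $d^2$ points $(1{:}i{:}j)$, all of height $\le d$, and takes $H\asymp d$. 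Both regimes are legitimate, since only ``arbitrarily large'' $d$ and $H$ are asked for.

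The one real point to check is that $X_d$ is geometrically integral: a union of hyperplanes would be reducible, and for $c=1$ the factor $x_1-x_2$ splits off, but for $c\ne1$ that factor is gone, and one checks that the separated-variable curve $\prod_i(u-i)=c\,\prod_j(v-j)$ is geometrically irreducible for all but finitely many $c$ — for instance by taking $d$ prime, so that $\prod_i(T-i)$ is indecomposable, and $c$ outside a finite exceptional set — whence, for such a $c$, the cone $X_d$ over it is geometrically integral of degree $d$. Accordingly, I expect the main obstacle to lie not in the (elementary) counting but, on the upper-bound side, in the uniform-in-$d$ execution of the determinant method behind \eqref{eq:dgu} — the substantive content of \cite{BinCluKat} — and, on the lower-bound side, in reconciling the abundance of ``obvious'' rational points with the geometric irreducibility of the hypersurface carrying them.
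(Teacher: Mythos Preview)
The paper does not prove this theorem: it is recalled as background, with \eqref{eq:dgu} attributed to Theorem~4 of \cite{BinCluKat} and \eqref{eq:dgl} to \cite[Section~6]{CCDN-dgc}, and no argument is offered beyond these citations. Your proposal correctly locates both halves in the literature and goes further by sketching the actual arguments.

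Your explicit lower-bound hypersurface $\prod_i(x_1-ix_0)=c\prod_j(x_2-jx_0)$ is very much in the spirit of \cite[Section~6]{CCDN-dgc}; the counting on the linear spaces $Y_{i,j}$, the overlap bound $O(d^4H^{n-2})$, and the appeal to indecomposability of a prime-degree polynomial to secure geometric irreducibility of the separated-variable curve for all but finitely many $c$ are all sound. The one minor point is that for $n=2$ your choice $H\asymp d$ ties the two parameters together, but this is permitted by the phrasing ``arbitrarily large values of $d$ and $H$''. The upper-bound sketch is inevitably superficial --- reproving it in full is the substance of \cite{BinCluKat} --- but you correctly identify the determinant method with uniform-in-$d$ bookkeeping as the mechanism, and you are right that this is where the real work lies.
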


We give improved lower bounds for worst case projective dimension growth, which are almost
quadratic in $d$ %(see Corollary \ref{cor:lowerb})
instead of linear as in (\ref{eq:dgl}); it is our main result.

\begin{Thm}[Eventual tightness of projective dimension growth]\label{cor:lowerb}
For any $\e>0$ and $N>1$ there exist $n=n(\e)$, $H>N$, $d>N$
and an integral hypersurface $X\subset\PP_{\QQ}^{n}$ of degree $d$
such that
$$
d^{2-\e } H^{\dim X} \le   \# X(\Q,H). %\# X(\Q,2H\log H).
$$
\end{Thm}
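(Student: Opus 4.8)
The strategy is to construct, for suitable large $n$, an explicit hypersurface $X \subset \PP_{\QQ}^n$ of large degree $d$ carrying many rational points of bounded height. The natural candidate is a hypersurface of the form
$$
X : \ x_0^d + x_1^d + \cdots + x_k^d = g(x_{k+1},\ldots,x_n)
$$
or, more flexibly, a cone-type or product-type construction: take a low-dimensional variety with many points and "spread it out" over a projective space of dimension $n$ so that the fibre dimension $\dim X = n-1$ contributes a factor $H^{n-1}$ while the base still contributes a factor polynomial in $d$. Concretely, I would look for $X$ containing a linear subspace $\PP^{m}$ (contributing $\asymp H^{m}$ points trivially) together with a transverse direction in which a Fermat-type or diagonal equation of degree $d$ has $\gg d^{\,?}$ solutions of height $\le H$; the total point count is then a product. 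The key numeric bookkeeping is: if one can get $\asymp d^{a}$ points in an auxiliary piece of dimension $b$ using height $H \asymp d$, and combine with a linear space of the complementary dimension, the exponent of $d$ in the final bound works out to something approaching $2$ as the ambient dimension $n \to \infty$, matching the claimed $d^{2-2/n}$ of the abstract (hence $d^{2-\e}$ for $n$ large).

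The concrete steps I would carry out are: (1) Fix $\e>0$; choose $n$ large enough that $2/n < \e$. (2) Exhibit a geometrically integral hypersurface $X\subset\PP^n_\QQ$ of a chosen degree $d$ whose defining equation is built from a diagonal/norm-form piece plus a linear or low-degree piece, arranged so that $X$ contains a large family of rational points parametrized by (integer points in a box of size $\asymp d$) $\times$ (integer points in a box of size $\asymp H$ in $\PP^{n-2}$ or so). The point is that over each choice of the "small" coordinates lying on the diagonal subvariety one gets a full-dimensional linear space of solutions. (3) Count: the diagonal piece of degree $d$ in, say, $n-1$ variables has at least $c(n) d^{\,n-2}$ projective solutions with coordinates bounded by (a constant times) $d$ — this is the standard "many solutions to $x_1^d+\cdots = 0$ by scaling/permitting zero coordinates" type lower bound, or alternatively one uses the lower bound mechanism already invoked from \cite[Section 6]{CCDN-dgc} in a higher-dimensional fibred form. (4) Multiply by the $H^{n-1}$ coming from the transverse linear space and set $H \asymp d$ to turn $d^{n-2}$ into $d^{\,n-2}$ while $H^{\dim X}=H^{n-1}$ is as required; re-deriving the exponent $2-2/n$ after normalizing $H$ versus $d$. (5) Check geometric integrality of $X$ (e.g. via an Eisenstein-type or smoothness-at-a-point argument on the defining polynomial), and that $d\ge 4$, $H>N$, $d>N$ can all be met by taking the free parameter large.

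The main obstacle, and the step requiring genuine care, is step (3)–(4): producing a hypersurface of degree exactly $d$ whose point count genuinely scales like $d^{\,n-2}H^{n-1}$ (rather than merely $d\cdot H^{n-1}$) while remaining \emph{irreducible/geometrically integral} — the tension is that the most point-rich degree-$d$ hypersurfaces tend to be reducible (unions of hyperplanes), so one must combine a point-rich reducible-looking skeleton with an irreducible perturbation without destroying the points. The right device is presumably a hypersurface containing many rational linear subspaces of dimension $n-2$ — the "many linear spaces" must number about $d^{\,n-2}$, with each contributing $\asymp H^{n-1}$ points — together with an argument (degree count / generic smoothness) that the ambient degree-$d$ hypersurface through all of them is irreducible. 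Getting the counting of such linear subspaces to reach the $d^{\,n-2}$ threshold, and verifying irreducibility, is where the real work lies; the rest is routine asymptotic bookkeeping to pass from $2-2/n$ to $2-\e$. I would also keep in mind the companion Proposition \ref{prop:lowerb:en}, which presumably records the cleaner explicit bound $c(n)d^{2-2/n}H^{n-1}$ from which Theorem \ref{cor:lowerb} follows immediately by choosing $n$ with $2/n<\e$.
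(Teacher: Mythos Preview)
Your reduction step---choose $n$ with $2/n<\e$ and aim for the explicit bound $c(n)d^{2-2/n}H^{n-1}$ of Proposition~\ref{prop:lowerb:en}---is exactly right and matches the paper. But your proposed construction has a genuine gap at precisely the point you yourself flag as ``where the real work lies'': you have not exhibited an irreducible degree-$d$ hypersurface containing $\gg d^{2-2/n}$ rational $(n-2)$-planes (or an equivalent point-rich structure), and the candidates you list do not do this. A diagonal form $x_1^d+\cdots+x_{n-1}^d$ does \emph{not} have $c(n)d^{n-2}$ rational points of height $\le d$ in any evident way---for most $d$ it has essentially no nontrivial rational points at all---so step~(3) as written fails. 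The cone and product devices you mention either yield reducible varieties or reproduce only the known linear-in-$d$ bound from \cite{CCDN-dgc}. And the parameter count suggesting a degree-$d$ hypersurface can absorb about $d^2$ codimension-two planes is only a dimension heuristic, not a construction over $\QQ$ with the planes rational and the hypersurface integral.

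The paper takes a quite different route, avoiding explicit geometry altogether. It first observes, by plain linear algebra, that there is a (typically reducible) form $h$ of degree $\asymp D$ in $n$ variables vanishing on \emph{every} rational point of $\PP^{n-1}$ of height $\le H$, as soon as $H^n\le D^{n-1}$. From $h$ it builds a one-parameter family
\[
F(x_0,\ldots,x_n,t)=h^{(0)}(x_0,x_1,\ldots,x_{n-1})+t\,h^{(0)}(x_n,x_1,\ldots,x_{n-1})
\]
(after arranging that every irreducible factor of $h^{(0)}$ involves $x_0$), checks that $F$ is irreducible over $\QQ(t)$, and then specializes $t=\alpha\in\QQ$ via Hilbert's Irreducibility Theorem to obtain an irreducible form $g$ in $n+1$ variables. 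A pigeonhole/fibred-product argument shows that the zero set of $g$ still contains $\gg H^{n+1}$ points of height $\le H$; choosing $D^{n-1}\asymp H^n$ converts $H^{n+1}$ into $d^{2-2/n}H^{n-1}$. Thus the irreducibility obstacle that blocks your approach is handled not by a geometric genericity or smoothness argument but by Hilbert irreducibility applied to a one-parameter pencil---an ingredient entirely absent from your plan.
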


Theorem \ref{cor:lowerb} implies the tightness of the bounds (\ref{eq:dgu}),
up to the logarithmic factors and up to a factor $d^{\e}$. In Proposition \ref{prop:lowerb:en}, we make explicit how $n$ relates to $\varepsilon$, for which we discuss optimality in Remark \ref{rem:optimal}.
%we make explicit how $n$ depends on $\e$ in Theorem \ref{cor:lowerb}, in a form which is almost tight when $n=2$, see our suggestions in Remark \ref{rem:optimal} and our final observation from Proposition \ref{prop:curves}. Indeed,

\begin{Prop}[Lower bounds for worst case projective dimension growth]\label{prop:lowerb:en}
For any $n\ge2$ and any $N$, there exist $c=c(n)>0$, $H>N$, $d>N$ and an integral hypersurface $X\subset\PP_{\QQ}^{n}$ of degree $d$ such that
\begin{equation}\label{bounds:lower}
cd^{2-2/n}H^{\dim X}\le\#X(\QQ,H).%\#X(\Q,2H\logH).
\end{equation}
%with $e_n = 2/(n + 2)$. % is even, and, $e_n = 4/(n + 1)$ when $n$ is odd.
\end{Prop}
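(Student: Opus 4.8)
The plan is to construct, for each $n\ge 2$, an explicit hypersurface $X\subset\PP^n_\QQ$ of large degree $d$ that contains a linear subspace $\PP^{n-1}$ defined over $\QQ$ — more precisely, whose defining polynomial is divisible-looking enough that one coordinate hyperplane, or a hyperplane of the form $x_0 = 0$ together with a controlled factor, contributes $\asymp H^{n-1}$ rational points of height at most $H$, and then to \emph{multiply} this contribution by choosing the degree $d$ and the height $H$ in a balanced way so that the hypersurface actually contains many \emph{parallel} or \emph{translated} copies that are all distinct but all lie on one irreducible hypersurface of degree $d$. The model to imitate is the worst-case example behind (\ref{eq:dgl}) in \cite[Section 6]{CCDN-dgc}: there a single hyperplane's worth of points, $\asymp d H^{n-1}$, is obtained by taking something like $X\colon x_0 g(x_0,\dots,x_n) = x_1 x_2 \cdots$ so that roughly $d$ hyperplanes (or a union giving a factor $d$) sit inside $X$. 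To push from $d$ to $d^{2-2/n}$ one wants, heuristically, to pack about $d^{1-2/n}$ times more rational points per hyperplane, which forces $H$ to be a power of $d$.

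Concretely, first I would take $H$ of the form $H \asymp d^{\alpha}$ for an exponent $\alpha=\alpha(n)$ to be optimized. On a fixed hyperplane $\cong\PP^{n-1}$ the number of rational points of height $\le H$ is $\asymp H^{n-1} \asymp d^{\alpha(n-1)}$. Second, I would build $X$ so that it contains $\asymp d$ distinct rational hyperplanes (as in \cite{CCDN-dgc}), say the zero set of a product $\prod_{i}(a_i x_0 + b_i x_1)$ times a complementary generic factor making the total degree exactly $d$ and the whole thing geometrically integral after a small perturbation; this already yields $\asymp d\cdot H^{n-1}$ points. Third — and this is the new ingredient giving the extra power of $d$ — instead of taking the hyperplanes through a common linear space I would arrange the $\asymp d$ linear factors to be \emph{generic}, so that the union of the $\binom{?}{?}$ pairwise intersections, or rather the rational points lying on \emph{several} of these hyperplanes simultaneously, do not cause overcounting, while at the same time the individual hyperplanes, being defined by small integer coefficients, each still contain the full $\asymp H^{n-1}$ lattice of points; balancing "small coefficients" (needed so the hyperplane is not height-starved) against "many hyperplanes" is where the constraint $d$ vs. $H^{1/?}$ enters and produces the exponent $2-2/n$ after optimizing $\alpha$.

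The main obstacle I expect is twofold. First, ensuring \emph{integrality} (indeed that $X$ is an integral hypersurface of degree \emph{exactly} $d$) while still retaining the full count: a reducible union of hyperplanes is not integral, so one must add a generic irreducible factor of complementary degree and argue that it does not kill more than a negligible fraction of the constructed points, nor create multiplicities; this is a standard but delicate perturbation argument. Second, and more essential, is the \emph{counting with the right height normalization}: after clearing denominators, the hyperplanes are cut out by integer forms $a_i x_0 + b_i x_1 + \cdots$ and the rational points of projective height $\le H$ that lie on such a hyperplane and have a prescribed coordinate pattern must be counted via a lattice-point estimate in a sublattice of $\ZZ^{n+1}$ of covolume $\asymp |(a_i,b_i,\dots)|$; making the total over all $i$ add up to $\gg d^{2-2/n}H^{n-1}$ requires choosing the coefficient vectors to have height roughly $H^{1/n}$ (so there are $\asymp H$ of them, of which one keeps $\asymp d$) and then the per-hyperplane count is $\asymp H^{n-1}/H^{1/n}$ — multiplying by $d$ hyperplanes and optimizing $H\asymp d^{\,n/(n-1)\cdot(1/?)}$ gives the claimed bound. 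I would finish by verifying the arithmetic of exponents: with $H\asymp d^{\,n/(2n-2)}$ (or whichever value the optimization dictates) the product of "number of hyperplanes" $\asymp d$ and "points per hyperplane" $\asymp H^{n-1}/H^{1/n} = H^{(n^2-n-1)/n}$ exceeds $d^{2-2/n}H^{n-1}$, which yields (\ref{bounds:lower}). Theorem \ref{cor:lowerb} then follows immediately by choosing $n$ with $2/n<\e$.
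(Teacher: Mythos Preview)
Your approach has a genuine gap at the irreducibility step. You propose to take a union of $\asymp d$ rational hyperplanes (with small coefficients so that each carries many low-height points) and then ``perturb'' or ``multiply by a complementary generic factor'' to obtain an integral hypersurface of degree $d$. Neither operation does what you need. Multiplying $\prod_i L_i$ by an irreducible factor of complementary degree leaves the result reducible. Adding a generic perturbation, say replacing $\prod_i L_i$ by $\prod_i L_i + \epsilon G$ with $G$ generic of degree $d$, does give an irreducible hypersurface for $\epsilon\neq 0$, but it no longer contains the hyperplanes $L_i=0$: a point $p$ with $L_j(p)=0$ lies on the perturbed hypersurface only if $G(p)=0$ as well, and for generic $G$ this kills all but a negligible fraction of your points. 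So the very step you flag as ``standard but delicate'' is in fact the essential difficulty, and your sketch does not resolve it. (Your exponent bookkeeping is also shaky---the choice $H\asymp d^{n/(2n-2)}$ does not match the balance you set up a few lines earlier---but this is secondary.)

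The paper's route is entirely different and sidesteps the problem. One first observes, by a linear-algebra dimension count, that when $H^n\le D^{n-1}$ there is a (possibly reducible) homogeneous $h\in\QQ[x_0,\dots,x_{n-1}]$ of degree $\asymp D$ vanishing on \emph{every} rational point of $\PP^{n-1}$ of height $\le H$. After a pigeonhole reduction one may assume every $\QQ$-irreducible factor of $h$ involves $x_0$. One then forms
\[
F(x_0,\dots,x_n,t)=h(x_0,x_1,\dots,x_{n-1})+t\cdot h(x_n,x_1,\dots,x_{n-1}),
\]
shows $F$ is irreducible in $\QQ[x_0,\dots,x_n,t]$ (the two summands are coprime in $\QQ[x_0,\dots,x_n]$), and applies Hilbert's Irreducibility Theorem to specialize $t=\alpha\in\QQ$ so that $g:=F(\cdot,\alpha)$ is irreducible of degree $d\asymp D$. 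The zero set of $g$ contains the fibered product, over $(x_1,\dots,x_{n-1})$, of the zero set of $h$ with itself, and a second pigeonhole argument shows this fibered product still carries $\gtrsim H^{n+1}$ rational points of height $\le H$---a positive proportion of all such points in $\PP^n$. Choosing $D^{n-1}\asymp H^n$, i.e.\ $d\asymp H^{n/(n-1)}$, converts $H^{n+1}$ into $d^{2-2/n}H^{n-1}$, which is exactly (\ref{bounds:lower}). The idea you are missing is precisely this algebraic device: irreducibility is obtained via a one-parameter family plus Hilbert, while the fibered-product structure guarantees that the specialized hypersurface still contains enough points.
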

Note that in any fixed dimension there is still room for a small improvement in the exponent of $d$, either in the lower bounds of Proposition \ref{prop:lowerb:en} or the upper bounds (\ref{eq:dgu}) or both, and we indicate a first step in this direction for curves in Proposition \ref{prop:curves} below.

The dimension growth upper bounds first came as a question by Heath-Brown \cite[p.~227]{Heath-Brown-cubic} and by Serre
\cite[page 178]{Serre-Mordell}, \cite[p.~27]{Serre-Galois}, and addressed the dependence on $H$, with the correct exponent of $H$ being the dimension. The control on the degree $d$ (quadratic in $d$ for hypersurfaces) came more recently, in \cite{BCN-d} for curves over $\QQ$ (after a question by Salberger \cite{Salberger-dgc})  and in \cite{BinCluKat} for general dimension and over any global field; see also \cite{CCDN-dgc,CHNV-dgc,Pared-Sas} for arbitrary codimension with upper bounds which are still polynomial in $d$.

\bigskip
The proofs of Theorem \ref{cor:lowerb} and Proposition \ref{prop:lowerb:en} are based on the key Lemma %s \ref{x1fxfy} and
\ref{lem:lowerbound} below, which should be compared with Lemma 6.1 of \cite{CCDN-dgc} and its use in \cite[Section 6]{CCDN-dgc} for the linear bounds from (\ref{eq:dgl}). We discuss tightness for any  $n\ge 2$ in Remark \ref{rem:optimal}.
For dimension growth for \emph{affine} hypersurfaces of degree $d$ (satisfying some extra conditions related to irreducibility), tightness of the quadratic dependence on $d$ is already known by \cite[Theorems 3, 5]{BinCluKat}, \cite[Section 6]{CCDN-dgc}. Our projective case thus complements the picture of tightness for dimension growth bounds for hypersurfaces.
For simplicity we work with $\QQ$-rational points and varieties over $\QQ$ in this paper, but the generalization of Theorem \ref{cor:lowerb} to any global field $K$ (of any characteristic) is possible, corresponding to the set-up of Theorem 4 of \cite{BinCluKat}.

\bigskip
While the upper bounds (\ref{eq:dgu}) of dimension growth rely on the global determinant method from \cite{Salberger-dgc} for points with low multiplicity reduction modulo some primes and a separate treatment for points with high multiplicity reductions in \cite{BinCluKat}, our lower bounds from Theorem \ref{cor:lowerb} and Proposition \ref{prop:lowerb:en} are more elementary and rely on the following key lemma.
% some counting arguments and Hilbert's Irreducibility Theorem which are used to show the%

%Our first lemma helps to create absolutely irreducible polynomials.
%Let $f(x)$ be a nonzero polynomial over $\QQ$ in variables $x=(x_1,\ldots,x_n)$ for some $n>0$. Hilbert's Irreducibility Theorem gives many rational numbers $\alpha$ so that $\alpha f(x)+f(y)$ is irreducible in $\QQ[x,y]$, where $y=(y_1,\ldots,y_n)$ is a tuple of new variables. The following lemma is a variant of this, with absolute irreducibility instead of irreducibility over $\QQ$.

%\begin{Lem}\label{x1fxfy}
%\end{Lem}
%\begin{proof}
%\end{proof}
%\begin{Rem}
%Instead of Lemma \ref{x1fxfy}, one can also use self-convolutions of $f$ and the more general theory of convolutions from \cite{} to create a polynomial which is absolutely irreducible, but this probably leads to a slightly higher number of variables than the polynomial $g$ of Lemma \ref{x1fxfy}. Lemma 2 was inspired by the ideas of taking self-convolutions of $f$, in the sense of \cite{}.
%\end{Rem}

\begin{Lem}[Key Lemma]\label{lem:lowerbound} %Given $c_0\ge 1$
%There is a constant $c\ge 1$ such that %the following holds. % for $i=1,2$.
For any integers $n\ge2$, $D\ge2$ and $H\ge2$ with
\begin{equation}\label{eq:D:H}
H^{n}\le D^{n-1}%\lec_{0}H^{n},
\end{equation}
% (H +1)^{n}$
%For each sufficiently large integers $d$, $n$ and $H $ such that $d$ is a multiple of $n$, $d/n$ is a multiple of $H $, and  %and $0\le H  <  d$
%$$
%H ^{n+1}=(\frac{d}{n})^n,
%$$ % (H +1)^{n}$
there exists an irreducible homogeneous polynomial
\[
g\in\QQ[x_{0},x_{1},\ldots,x_{n}]
\]
of degree $d$ with $D\le d\le c_{1}D$ such that $g$ vanishes on
at least $c_{2}H^{n+1}$ many %different
rational points on $\PP_{\QQ}^{n}$ of height at most $H$ for some
constants $c_{i}=c_{i}(n)>0$ depending only on $n$. %$C_3 H ^{2n+1}$ many points in $\P^{2n}(\Q)$ of height at most $H$. (Just height at most $NH$ instead of $H (\log H )^N$ may work equally well.)
\end{Lem}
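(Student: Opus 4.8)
The plan is to take for $g$ a suitably generic homogeneous form of degree $d$ comparable to $D$ that vanishes on the \emph{entire} set $S:=\PP_{\QQ}^{n}(\QQ,H)$ of rational points of height at most $H$: its existence will follow from a dimension count, and its irreducibility from Bertini's irreducibility theorem.

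First I would record two elementary facts: (i) counting primitive integer vectors gives $c_{2}(n)H^{n+1}\le\#S\le c_{3}(n)H^{n+1}$ for suitable positive constants depending only on $n$ (e.g.\ $c_{3}=3^{n+1}$ works); and (ii) raising the hypothesis $H^{n}\le D^{n-1}$ to the power $\tfrac{n}{n-1}$ and using $\tfrac{n^{2}}{n-1}=n+1+\tfrac{1}{n-1}\ge n+1$ yields $D^{n}\ge H^{n+1}$. Then I would fix a constant $C=C(n)\ge1$ large enough that $C^{n}>n!\,(c_{3}+C+2)$ and set $d:=\lceil CD\rceil$, so that $D\le d\le(C+1)D$; a one-line computation using (i), (ii), $\binom{d+n}{n}>d^{n}/n!$, $D^{n}\ge D$ and $D^{n}\ge1$ then gives $\binom{d+n}{n}>\#S+d+1$. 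This is where the hypothesis $H^{n}\le D^{n-1}$ enters, and it is the only place it is used. Note that for small $H$ one genuinely needs $d$ bigger than $D$ by a constant factor, so the freedom $D\le d\le c_{1}D$ in the statement is essential.

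Next I would introduce the linear space $V\subseteq\QQ[x_{0},\ldots,x_{n}]$ of homogeneous forms of degree $d$ vanishing on $S$; since each point of $S$ imposes at most one linear condition, $\dim V\ge\binom{d+n}{n}-\#S>d+1$. A basis $g_{0},\ldots,g_{m}$ of $V$ (with $m=\dim V-1>d$) defines a rational map $\f\colon\PP^{n}\to\PP^{m}$, defined off the base locus, whose image spans $\PP^{m}$ (otherwise a nonzero linear combination of the $g_{i}$ would vanish identically) and hence is a nondegenerate irreducible subvariety. The key geometric claim is that $\dim\overline{\f(\PP^{n})}\ge2$. Were this image a nondegenerate irreducible curve $Z$, the minimal-degree bound for nondegenerate curves would give $\deg Z\ge m$; on the other hand, pulling back a general hyperplane $h\subset\PP^{m}$ exhibits $\f^{-1}(h)$ as the zero locus of a nonzero form of $V$, hence a hypersurface of degree $d$, which contains the closures of the $\deg Z$ fibres of $\f$ over the points of $h\cap Z$ — and, the general fibre having dimension $n-1$, these are $\deg Z$ distinct $(n-1)$-dimensional hypersurfaces, so $d\ge\deg Z$. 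Together these force $m\le d$, contradicting $m>d$. Granting the claim, Bertini's irreducibility theorem yields a dense Zariski-open subset of $|V|=\PP^{m}$, defined over $\QQ$, parametrizing geometrically integral members; since $\QQ$ is infinite this set has a $\QQ$-point, giving a homogeneous $g\in\QQ[x_{0},\ldots,x_{n}]$ of degree exactly $d\in[D,(C+1)D]$ whose zero locus is geometrically integral — in particular $g$ is irreducible — and which vanishes on all $\ge c_{2}H^{n+1}$ points of $S$. Taking $c_{1}:=C+1$ and $c_{2}:=c_{2}(n)$ finishes the proof.

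The counting of the first two paragraphs is routine. The step I expect to be the main obstacle is the geometric input used for irreducibility: verifying the hypothesis $\dim\overline{\f(\PP^{n})}\ge2$ of Bertini's theorem for this particular, base-point-heavy linear system $|V|$, and then checking that a geometrically integral general member can be realized already over $\QQ$ rather than only over $\overline{\QQ}$ (descent of the Bertini open locus, which is automatic here because the locus is defined over $\QQ$ and $\QQ$ is infinite).
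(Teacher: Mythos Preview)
Your route is genuinely different from the paper's. The paper works one dimension lower: it finds a (possibly reducible) form $h$ in $\QQ[x_0,\ldots,x_{n-1}]$ vanishing on $\PP^{n-1}(\QQ,H)$, massages it into $h^{(0)}$ whose irreducible factors all involve $x_0$, forms the pencil $F=h^{(0)}(x_0,\ldots,x_{n-1})+t\,h^{(0)}(x_n,x_1,\ldots,x_{n-1})$, and specializes $t$ by Hilbert's Irreducibility Theorem; the many rational zeros come from a fibred-product pigeonhole count. Your approach --- a dimension count directly in $\PP^n$ plus Bertini --- is cleaner and even gives the stronger conclusion that $g$ vanishes on \emph{all} of $S$.

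There is, however, a real gap in the Bertini step. Bertini's irreducibility theorem (Jouanolou, Th.~6.3) says that if $\dim\overline{\f(\PP^n)}\ge2$ then for general $g\in V$ the set $\{g=0\}\setminus B$ is geometrically irreducible, where $B$ is the base locus. To deduce that the polynomial $g$ itself is irreducible you must know that $|V|$ has no fixed component, i.e.\ that $B$ has codimension $\ge2$; otherwise every member of $|V|$ factors as $f\cdot g'$ with $f$ the fixed part. You flag the system as ``base-point-heavy'' but never rule this out, and it is not automatic for this $S$: for instance when $n=2$ and $d$ is below $\#\PP^1(\QQ,H)\asymp H^2$, every coordinate line --- a divisor in $\PP^2$ --- is forced into $B$. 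Your argument that the image has dimension $\ge2$ does survive after factoring out the fixed part (replace $d$ by $d-\deg f$; the inequality $m>d\ge d-\deg f$ still contradicts $\deg Z\ge m$), so the issue is solely with the irreducibility conclusion.

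The gap is easy to close, but it needs one extra observation you did not make. For each $i$ there is, by the same linear-algebra count applied in $n$ variables, a nonzero form $h_i\in\QQ[x_0,\ldots,\widehat{x_i},\ldots,x_n]$ of degree at most $cD$ vanishing on $\PP^{n-1}(\QQ,H)$, hence on $S$ (project to $\PP^{n-1}$ by dropping $x_i$). Since $h_i$ omits the variable $x_i$, any common factor of $h_0,\ldots,h_n$ omits every variable and is constant. Padding the $h_i$ by arbitrary monomials places them in $V_d$ once $d\ge cD$, so enlarging your constant $C$ to $C\ge c$ guarantees that $V_d$ has no fixed component, and then your Bertini argument goes through as written.
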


The proof of the lemma consists of a few steps. We first find a potentially non-irreducible degree $d$ homogeneous polynomial $h$ in $n$ variables which vanishes on all points of bounded height $H$, for $H$ as in (\ref{eq:D:H}). We then apply a trick in number theory (Claim 1) followed by Hilbert's {I}rreducibility {T}heorem to produce from it a new $d$-homogeneous polynomial in $n+1$ variables, which is now irreducible, but still vanishes on a positive proportion of rational points of bounded height. To verify the latter we use a counting argument (Claim 2).

\begin{proof}[Proof of Lemma \ref{lem:lowerbound}] %The lemma is immediate if $d=1$, so we can assume that $d \geq 2$. %Write $k$ for $\lfloor \left(\frac{d-1}{n}\right)^{n/(n+1)} \rfloor$, %$k'$ for $\lceil \frac{1}{2}\left(\frac{d-1}{n}\right)^{n/(n+1)} \rceil$,
%and $H $ for $\lfloor \frac{d-1}{n} \rfloor$. Note that  $(H +1)^n \ge (k+1)^{n+1}$. %Note that   $H  \lfloor \frac{d-1}{n} \rfloor$.
%Let $\Sigma$ be the intersection of $\ZZ^{n+1}$ with the box $[-H,H]^{n+1}$.
%the subset of $\P^{n}(\Q)$ consisting of the points of height at most $H$.  Note that $c_1H^{n+1} < \#\Sigma < c_2H^{n+1}$ for some positive constants $c_1,c_2$.
%Note that $\#\Sigma = (2H+1)^{n+1}$.
%Choose a finite set $\Sigma \subset \Z^{n+1}$ containing $\Sigma_0$ and such that $\#\Sigma = (H +1)^n$.
First observe that there exists a (non-zero) homogeneous polynomial $h$ in $\QQ[x]=\QQ[x_{0},\ldots,x_{n-1}]$ which vanishes at all
rational points in $\PP_{\QQ}^{n-1}$ of height at most $H$, and, whose degree lies between $D$ and $cD$ for some $c=c(n)>1$. Indeed,
there are $\binom{n-1+d}{n-1}$ many monomials of degree $d$ in $n$ variables, there are no more than $3^{n}H^{n}$ many rational points
in $\PP_{\QQ}^{n-1}$ of height at most $H$, and, each such rational point imposes a homogeneous linear equation on the coefficients of
a general homogeneous polynomial $h$ of degree $d$. Since clearly
\[
\binom{n-1+d}{n-1}>(\frac{d}{n})^{n-1}\ge(\frac{D}{n})^{n-1}\ge\frac{H^{n}}{n^{n-1}},%>\#\Sigma/3^{n+1},
\]
the observation follows.

Now let $h$ be a polynomial as in the above observation. Decompose $h$ into irreducible factors in $\QQ[x]$ and %Remove from the list of irreducible factors of $h$ all factors of the form $x_i$ for $i=0,\ldots, n$ and
denote the resulting collection of irreducible factors by $(g_{\ell})_{\ell\in I}$ (without repetition). For each $0\leq i\leq n-1$, let $h^{(i)}$
be the product of those $g_{\ell}$, for $\ell\in I$, which depend non-trivially on $x_{i}$. By the Pigeonhole Principle, there must
be some $i$, such that the vanishing set of $h^{(i)}$ contains at least $C(n)H^{n}$ many points of height at most $H$ for some $C(n)$ depending only on $n$. Without
loss of generality, we may assume that $i=0$. Up to multiplying $h^{(0)}$ with a new $\QQ$-irreducible (dummy) polynomial which depends non-trivially on $x_{0}$, we may assume that the degree $d$ of $h^{(0)}$ lies between $D$ and $c_{1}D$ for some $c_{1}$ depending only on $n$.

We can recapitulate our construction so far, by stating that $h^{(0)}$ in $\QQ[x]$ is of degree $d$ between $D$ and $c_{1}D$, is reduced,
each $\QQ$-irreducible factor of $h^{(0)}$ depends non-trivially on $x_{0}$, and that $h^{(0)}$ vanishes on at least $C(n)H^{n}$
many points of height at most $H$.

Now we consider the polynomial $F(x,x_{n},t)$ in $\QQ[x,x_{n},t]$, with $x_{n},t$ two new variables, given by
\begin{equation}
F(x_{0},\ldots,x_{n},t):=h^{(0)}(x_{0},x_{1},\ldots,x_{n-1})+t\cdot h^{(0)}(x_{n},x_{1},\ldots,x_{n-1}). \label{eq:F}
\end{equation}
Note that $h^{(0)}(x_{0},x_{1},\ldots,x_{n-1})$ and $h^{(0)}(x_{n},x_{1},\ldots,x_{n-1})$ are coprime polynomials in $\QQ[x,x_{n}]$ (as the divisors of the left hand side all contain the variable $x_{0}$, and the right hand side does not).

We will use two claims to finish the proof of the lemma. \\

\textbf{Claim 1.} The polynomial $F$ from (\ref{eq:F}) is irreducible in $\QQ[x,x_{n},t]$. \\

Assume toward contradiction that $F$ is reducible, and write
\[
F(x,x_{n},t)=h_{1}(x,x_{n},t)\cdot h_{2}(x,x_{n},t),
\]
where $h_{1}$ and $h_{2}$ are non-constant and in $\QQ[x,x_{n},t]$. Clearly $F$ is irreducible as an element in the ring $\QQ(x,x_{n})[t]$, and hence either $h_{1}$ or $h_{2}$ are units in $\QQ(x,x_{n})[t]$. We may assume that $h_{1}$ is a unit in $\QQ(x,x_{n})[t]$,
and so
\[
h_{1}(x,x_{n},t)=\widetilde{h}_{1}(x,x_{n})
\]
for some polynomial $\widetilde{h}_{1}$ in $\QQ[x,x_{n}]$. Write
\[
h_{2}(x,x_{n},t)=\sum_{\ell=0}^{d}t^{\ell}u_{\ell}(x,x_{n}).
\]
We must have $u_{\ell}=0$ for $\ell\geq2$ and thus we can write
\[
h_{1}(x,x_{n},t)\cdot h_{2}(x,x_{n},t)=\widetilde{h}_{1}(x,x_{n})u_{0}(x,x_{n})+t\widetilde{h}_{1}(x,x_{n})u_{1}(x,x_{n})
\]
and find
\[
h^{(0)}(x_{0},x_{1},\dots,x_{n-1})=\widetilde{h}_{1}(x,x_{n})u_{0}(x,x_{n}),
\]
and
\[
h^{(0)}(x_{n},x_{1},\dots,x_{n-1})=\widetilde{h}_{1}(x,x_{n})u_{1}(x,x_{n}).
\]

%On the other hand, each of the polynomials $g_{i}(x_{1},...,x_{n})$
%and $g_{i}(x_{n+1},x_{2}...,x_{n})$ are irreducible in $\Q[x_{1},...,x_{n+1}]$
%and by the definition of $S_{1}$, the polynomials $g^{(1)}(x_{1},...,x_{n})=\prod_{i\in S_{1}}g_{i}(x_{1},...,x_{n})$
%and $g^{(1)}(x_{n+1},x_{2},...,x_{n}):=\prod_{i\in S_{1}}g_{i}(x_{n+1},x_{2}...,x_{n})$
%do not have common irreducible divisors.
Recall that $h^{(0)}(x_{0},x_{1},\ldots,x_{n-1})$ and $h^{(0)}(x_{n},x_{1},\ldots,x_{n-1})$ are coprime polynomials in $\QQ[x,x_{n}]$. This implies that $\widetilde{h}_{1}(x,x_{n})$ is a unit in $\QQ[x,x_{n}]$, which is a contradiction. This finishes
the proof of Claim 1. \\

\textbf{Claim 2. }Let $F(x_{0},\ldots,x_{n},t)$ be from (\ref{eq:F}). For every $\alpha\in\QQ$, the polynomial
\[
F(x_{0},\ldots,x_{n},\alpha)
\]
vanishes on at least $c_{2}H^{n+1}$ many rational points in $\PP_{\QQ}^{n}$ of height at most $H$, for some constant $c_{2}$ depending only on $n$. %, where $F$ is given by (\ref{eq:F})
\\

Let us explain how the lemma follows from the two above claims. Let $F$ be the polynomial from (\ref{eq:F}). By Claim 1 and Gauss's Lemma, $F$ is irreducible in $\QQ(t)[x_{0},\ldots,x_{n}]$. By Hilbert's Irreducibility Theorem we find a rational number $\alpha$ such that $F(x_{0},\ldots,x_{n},\alpha)$ is irreducible %(and clearly homogeneous)
in $\QQ[x_{0},\ldots,x_{n}]$, and we then find that
\[
g(x_{0},\ldots,x_{n}):=F(x_{0},\ldots,x_{n},\alpha)
\]
has all the desired properties for Lemma \ref{lem:lowerbound}. \\

It only remains to prove Claim 2 which we do now. The main idea is that the vanishing set of $F(x_{0},\ldots,x_{n},\alpha)$ contains the intersection of the vanishing sets of $h^{(0)}(x_{0},x_{1},\ldots,x_{n-1})$ and $h^{(0)}(x_{n},x_{1},\ldots,x_{n-1})$, which has a fibered product structure with respect to the projection to the coordinates $x_{1},\ldots,x_{n-1}$. Since $h^{(0)}$ vanishes on positive density of rational points of bounded height, by some form of the Pigeonhole Principle we conclude that so does the fibered product.

Recall that any
$x\in\PP_{\QQ}^{n}(\QQ,H)$ can be represented as a tuple $(x_{0},\ldots,x_{n})$ of integers with greatest common divisor equal to $1$, and which satisfy $\left|x_{i}\right|\leq H$ for all $i\in\{0,\dots,n\}$.
For each $m\in\NN$, denote by
\[
S_{H,m}:=\left\{ (x_{0},\ldots,x_{m})\in\ZZ^{m+1}:\mathrm{gcd}(x_{0},\ldots,x_{m})=1\text{ and }\left|x_{i}\right|\leq H,\,\forall i\in\{0,\dots,m\}\right\} .
\]
Since each $x\in\PP_{\QQ}^{n}(\QQ,H)$ has at most two such representations $\pm(x_{0},\ldots,x_{n})$ in $S_{H,n}$, it is enough to show that for every $\alpha\in\QQ$, the polynomial $F(x_{0},\ldots,x_{n},\alpha)$ vanishes on at least $c_{2}H^{n+1}$ points in $S_{H,n}$. We already know that $h^{(0)}(x_{0},x_{1},\ldots,x_{n-1})$ vanishes on a set $T\subseteq S_{H,n-1}$ of cardinality at least $C(n)H^{n}$. Denote by
\[
T_{\widetilde{x}}:=\left\{ x_{0}\in\ZZ:(x_{0},\widetilde{x})\in T\right\} ,
\]
where $\widetilde{x}:=(x_{1},\ldots,x_{n-1})$,  and note that $T=\bigsqcup_{\widetilde{x}\in\ZZ^{n-1}}T_{\widetilde{x}}$, where $\bigsqcup$ indicates that the union is disjoint.

Since the cardinality of the set $T$ is at least $C(n)H^{n}$, there
are at least $\frac{C(n)}{3^{n}}H^{n-1}$ tuples $\widetilde{x}\in\ZZ^{n-1}$
in which $\#T_{\widetilde{x}}>\frac{C(n)}{3^{n}}H$, as otherwise
\begin{align*}
\#T & =\sum_{\widetilde{x}:\#T_{\widetilde{x}}>\frac{C(n)}{3^{n}}H}\#T_{\widetilde{x}}+\sum_{\widetilde{x}:\#T_{\widetilde{x}}\leq\frac{C(n)}{3^{n}}H}\#T_{\widetilde{x}}\\
 & \leq\frac{C(n)}{3^{n}}H^{n-1}\cdot3H+(3H)^{n-1}\cdot\frac{C(n)}{3^{n}}H<C(n)H^{n},
\end{align*}
which yields a contradiction.

Hence, up to making $T$ %and $C(n)$
smaller, we may furthermore assume that for each $x$ in $T$, the set $T_{(x_{1},\ldots,x_{n-1})}$ is of size at least $\frac{C(n)}{3^{n}}H$.
Denote by $S\subseteq\ZZ^{n-1}$ the projection of $T$ to the coordinates
$(x_{1},\ldots,x_{n-1})$, and thus, by our assumption we have $\#S\geq\frac{C(n)}{3^{n}}H^{n-1}$.

Fix $\alpha\in\QQ$ and let $Y_{\alpha}\subseteq S_{H,n}$ be the
vanishing set of $F(x_{0},\ldots,x_{n},\alpha)$ in $S_{H,n}$. Since
$Y_{\alpha}$ contain the common vanishing set of $h^{(0)}(x_{0},x_{1},\ldots,x_{n-1})$
and $h^{(0)}(x_{n},x_{1},\ldots,x_{n-1})$ in $S_{H,n}$, the projection
of $Y_{\alpha}$ to the coordinates $(x_{1},\ldots,x_{n-1})$ has
fibers of size at least $\left(\#T_{(x_{1},\ldots,x_{n-1})}\right)^{2}\geq\frac{C(n)^{2}}{3^{2n}}H^{2}$.
Hence,
\[
\#Y_{\alpha}\geq\sum_{\widetilde{x}\in S}\left(\#T_{\widetilde{x}}\right)^{2}\geq\#S\cdot\frac{C(n)^{2}}{3^{2n}}H^{2}\geq\frac{C(n)^{3}}{3^{3n}}H^{n+1},
\]
as required. This finishes the proof of Claim 2 and thus of Lemma \ref{lem:lowerbound}.
\end{proof} %\begin{proof}
%By our assumption, $H$ is a large enough product of the first distinct primes.
%The proportion of elements from $\{1,\ldots,H\}$ not divisible by the primes smaller than $M$ is
%$$
%\prod_{p<M} (1-\frac{1}{p}),
%$$
%where the product is over primes smaller than $M$. Now we are done by Mertens' Theorem.
%Taking exponential and $\log$, this proportion is most $\exp (\log((1-1/2)(1-1/3)(1-1/5)...(1-1/M)))$. Since $\log (1-x) > -x$ this proportion is at most
%$$
%\exp(-(\mbox{sum of $1/p$ for all primes $p$ up to $M$})).
%$$
%This sum is at least $1/2\cdot \log(\log(M))$ for $M$ large enough. Hence, the proportion is at most $c_0/\sqrt{\log(M)}$ for some constant $c_0$.
%\end{proof}

\begin{proof}[Proof of Theorem \ref{cor:lowerb}]
Let $\e>0$ be given. Take $n$ sufficiently large (given $\e$), and let $H$, $D$, $g$ and $d$ be as in Lemma \ref{lem:lowerbound} with furthermore $d>N$ and $H>N$. Suppose furthermore that
\begin{equation}
D^{n-1}\le 2H^{n}.\label{eq:n:2H}
\end{equation}
%for some $c_0>1$ depending only on $\e$.
Let $X$ in $\PP_{\QQ}^{n}$ be the hypersurface defined by $g$. Then $X$ is as desired (using that $n$ is large, given $\e$). Indeed, one has by Lemma
\ref{lem:lowerbound} and our extra assumption (\ref{eq:n:2H})
\begin{equation}
c_{3}d^{2-2/n}H^{n-1}\le c_{2}H^{n+1}\le\#X(\QQ,H),\label{eq:n:even}
\end{equation}
for some constant $c_{3}>0$ depending only on $n$, and hence, if $2/n<\e$, then we are done by taking $d$ and $H$ large enough to
dominate the constant $c_{3}$.
\end{proof}

\begin{proof}[Proof of Proposition \ref{prop:lowerb:en}]
This follows from (\ref{eq:n:even}) in the proof of Theorem \ref{cor:lowerb}. %Let now $n$ be odd. Let $n_0$ be $(n-1)/2$ and take $h$ as in the claim of the proof of Lemma \ref{lem:lowerbound}, but  with $n_0$ in the role of $n$. Now let $g(x_0,\ldots,x_{n_0},y_0,\ldots,y_{n_0},t)$ be $x_0 h(x_0,\ldots,x_{n_0}) + th(y_0,\ldots,y_{n_0})$, namely, the homogenization of $x_0 h(x_0,\ldots,x_{n_0}) + h(y_0,\ldots,y_{n_0})$. Then $g(x,y,t)$ is an absolutely irreducible polynomial in $n+1$ variables that can be used as $g$ of Lemma \ref{lem:lowerbound}, to conclude the case of odd $n$.
\end{proof}

\begin{Rem}\label{rem:convolutions}
In order to show Theorem \ref{cor:lowerb}, instead of our key Lemma \ref{lem:lowerbound} one could also use more simply a self-convolution like $g(x,y,z):=h(x)+h(y)+h(z)$ of the polynomial $h(x)$ from the first observation in the proof of our key lemma. Then $g(x,y,z)$ vanishes on a positive proportion of rational points of bounded height, and by the more general theory of convolutions from \cite{GH19,GH21,GHb}, and explicitly by \cite[Theorem B(4)]{GH21}, it follows that $g$ is absolutely irreducible, as required. While this construction leads to a slightly higher number of variables than the polynomial $g$ of our key Lemma \ref{lem:lowerbound} and thus to a less sharp form of Proposition \ref{prop:lowerb:en}, it was the initial inspiration to this note. \end{Rem} %If one can take the polynomial $h$ in the beginning of the proof of Lemma \ref{lem:lowerbound} (absolutely) irreducible, then Corollary \ref{cor:lowerb:en} can be sharpened further with improved value $e_n = 2/(n+1)$ in (\ref{eq:en}), but we don't see how to do this.

When looking at the concrete lower bounds of (\ref{bounds:lower}) from Proposition \ref{prop:lowerb:en}, it is tempting to try to improve the upper bounds of (\ref{eq:dgu}) by shaving off some small value in the exponent of $d^{2}$, but we don't see how to do this except for the case of curves in $\PP^{2}$. We make this more precise in Remark \ref{rem:optimal} and in Proposition \ref{prop:curves}.

\begin{Rem}\label{rem:optimal}
Since there is still some gap between the upper bounds from (\ref{eq:dgu}) and the lower bounds (\ref{bounds:lower}) from Proposition \ref{prop:lowerb:en}, one may wonder how to improve either of them. Note however the following heuristic. If one improves our key lemma \ref{lem:lowerbound} to find an irreducible polynomial $g$ of substantially lower degree than in the lemma and which vanishes on all rational points of height up to $H$, then its restriction to $x_0=0$ would yield a polynomial $h_0$ which also vanishes on all rational points of height up to $H$, but which is of substantially lower degree than the polynomial $h$ of the beginning of the proof of the key lemma; the existence of such $h_0$ seems counter intuitive. This heuristic, along with Proposition \ref{prop:curves} seem to indicate that future improvements lie with the upper bounds.
\end{Rem}

The next proposition for curves brings the lower bounds from (\ref{bounds:lower}) closer to the upper bounds by improving the upper bounds from (\ref{eq:dgu}), but there is still a small gap in the exponent of $d$ in (\ref{bounds:lower}) versus (\ref{eq:en2}) with $n=2$ that is prone to further improvement.

\begin{Prop}\label{prop:curves}
Let $X$ be an irreducible curve in $\PP^{n}_\QQ$ of degree $d>1$ for some $n>1$ and let $H>2$ be given. %For such a curve $X$ we will show the upper bounds
Then one has
\begin{equation}
\#X(\QQ,H)\le c(n)d^{4/3}H(\log H)^{\kappa}\label{eq:en2}
\end{equation}
for some constant $c(n)$ and absolute constant $\kappa$.
\end{Prop}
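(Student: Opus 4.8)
The plan is to reduce the bound for curves in $\PP^n$ to the case of plane curves and then combine two known counting results with an optimization over a degree parameter. First I would recall that by a standard projection argument (as used in dimension growth proofs), counting points of height at most $H$ on an irreducible curve $X\subset\PP^n_\QQ$ of degree $d$ can be reduced, up to constants depending only on $n$, to counting points on an irreducible plane curve of the same degree $d$: project $X$ from a generic linear subspace to $\PP^2$; the image is an irreducible plane curve of degree at most $d$, the map is generically injective, and heights are controlled up to a factor depending only on $n$. So it suffices to prove $\#C(\QQ,H)\le c\,d^{4/3}H(\log H)^\kappa$ for an irreducible plane curve $C\subset\PP^2$ of degree $d$.

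For plane curves I would use two complementary upper bounds and balance them. The first is the classical bound coming from the (affine/projective) determinant method of Bombieri--Pila/Heath-Brown/Salberger: on an irreducible plane curve of degree $d$ one has $\#C(\QQ,H)\le c\,d^{?}H^{2/d}(\log H)^{\kappa}$-type estimates, but more useful here is the refined form $\#C(\QQ,H)\ll_{\epsilon} d\,H^{2/d+\epsilon}$, which for $d$ small compared to $\log H$ already beats $d^{4/3}H$. The second is the uniform bound $\#C(\QQ,H)\le c\,d^2 H(\log H)^{\kappa}$ coming from $(\ref{eq:dgu})$ with $n=2$ (this is exactly Theorem~\ref{thm:dg} applied to the plane curve, valid for $d\ge 4$; the cases $d\le 3$ are classical and give better bounds). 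The point is that the determinant-method bound is good when $d$ is not too large relative to $\log H$, while the dimension-growth bound $d^2H(\log H)^\kappa$ is the default; I would interpolate using a more careful version of the determinant method that tracks the dependence on $d$, namely a bound of the shape $\#C(\QQ,H)\le c\,d^{a}H^{1+b/d}(\log H)^{\kappa}$ and choose where to cut.

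More precisely, the key input I would invoke is the sharpened plane-curve estimate behind $(\ref{eq:dgu})$: for an irreducible plane curve of degree $d$ and a single prime $p$ of good reduction chosen of size a power of $\log H$, the global determinant method yields $\#C(\QQ,H)\le c\,d^{?}H(\log H)^{\kappa}$ together with the "local" bound $\#C(\QQ,H)\le c\,d H^{2/\sqrt d}(\log H)^\kappa$ or similar. Balancing $d^2 H$ against a bound that decays in $d$ like $H^{c/\sqrt{d}}$ when $\log H \asymp d$ is what produces the exponent $4/3$: writing the true count of the global determinant method as roughly $d \cdot (\text{number of auxiliary hypersurfaces})$, one gets about $d^{4/3}$ instead of $d^2$ by optimizing the degree of the auxiliary form against $d$ and $H$. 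Concretely, in the plane-curve case the auxiliary singular-curve degree $e$ can be taken smaller, and the gain in the exponent of $d$ from $2$ to $4/3$ comes from the relation $e \asymp d^{1/3}$ at the optimum; this is the quantitative heart of the argument and is implicit in the curve case of \cite{BCN-d,BinCluKat}.

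The main obstacle I expect is making the dependence on $d$ in the determinant method fully explicit and uniform: the published bounds $(\ref{eq:dgu})$ are stated as $d^2$, and extracting the finer $d^{4/3}$ for curves requires revisiting the determinant-method bookkeeping — in particular, the count of auxiliary forms of a given degree vanishing on $C$ modulo a prime, the number of residue classes, and the choice of the prime $p\asymp(\log H)^{O(1)}$ — and verifying that for $n=2$ the exponent optimization genuinely gives $4/3$ and that the logarithmic losses remain bounded by an absolute constant $\kappa$ (independent of $n$, as claimed). The reduction from $\PP^n$ to $\PP^2$ is routine but one must check the height distortion under generic projection does not spoil the clean exponent of $H$; since the target exponent of $H$ is just $1=\dim X$, any bounded distortion is harmless and only affects $c(n)$.
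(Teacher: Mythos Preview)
Your proposal is not a proof; it is a research plan with several explicit placeholders (literal ``$d^{?}$'' exponents, speculative ``$e\asymp d^{1/3}$'' optimization) and an acknowledged ``main obstacle'' of redoing the determinant-method bookkeeping. None of that is necessary here, and you have missed the elementary mechanism that actually produces the exponent $4/3$.

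The paper's argument is a three-line case split. The crucial input you overlooked is that for curves, Theorem~1(2) of \cite{BinCluKat} already gives, in the regime $\log H<d$, a bound with \emph{no} factor of $H$ at all:
\[
\#X(\QQ,H)\le c(n)\,d^{2}(\log H)^{\kappa}.
\]
Given this, the exponent $4/3$ falls out by pure arithmetic: if additionally $d^{2/3}<H$, then $d^{2}=d^{4/3}\cdot d^{2/3}<d^{4/3}H$, which is exactly \eqref{eq:en2}. The two leftover regimes are disposed of trivially: if $\log H\ge d$ one replaces a factor $d$ by $\log H$ in the cited bound, and if $H\le d^{2/3}$ one uses the crude count of all height-$\le H$ points (at most $c'H^{3}$ after reducing to the plane). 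There is no interpolation, no auxiliary-degree optimization, and no need to revisit the determinant method.

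Your instinct to project to $\PP^{2}$ is fine and is implicitly used in the trivial-bound step, but the substantive part of your plan---balancing a Bombieri--Pila-type $H^{2/d}$ bound against $d^{2}H$---is the wrong lever. The $4/3$ does not come from any such balance; it comes from the fact that once $H$ is eliminated entirely from the curve bound (as in \cite{BinCluKat}), one can reinsert a single factor of $H$ at the cost of $d^{2/3}$.
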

\begin{proof}
For showing (\ref{eq:en2}), we may suppose that $\log H<d$. Indeed, if $\log H\ge d$ then we can replace a factor $d$ by $\log H$ in the bounds (2) of Theorem %\ref{thm:main}
1 of \cite{BinCluKat}. Let us thus suppose that $\log H<d$ holds. By (2) of Theorem %\ref{thm:main}
1 of \cite{BinCluKat}, we then know that
\begin{equation}
\#X(\QQ,H)\le c(n)d^{2}(\log H)^{\kappa}\label{eq:enBCK}
\end{equation}
holds with some constant $c(n)$ and absolute constant $\kappa$. We may furthermore assume that
\begin{equation}
d^{2/3}<H.\label{eq:endH}
\end{equation}
Indeed, there are no more than $c'H^{3}$ many possible points of height at most $H$ in $\PP^{2}$, for some constant $c'$. Combining (\ref{eq:enBCK}) and (\ref{eq:endH}) leads to (\ref{eq:en2}).
\end{proof}

%The bounds from (\ref{eq:en2}) of Proposition \ref{prop:curves} show that in the case of $n=2$, the suggested upper bounds from (\ref{eq:dgu:upper-opt}) do indeed hold.
Finally let us mention the challenge to understand the optimal dependence on the degree $d$ for dimension growth bounds in more general codimensions than for hypersurfaces; this is known to be at most polynomial in $d$ by \cite{CCDN-dgc,CHNV-dgc,Pared-Sas}, but the optimal exponent of $d$ still needs to be discovered in general codimension.

%\begin{proof}[Proof of Proposition~\ref{prop:W1.1:opt}]
%If $d = 1, 2$ then we let $X$ be a line resp.\ conic through a coordinate point, so that we can take $B = 1$. If $d \geq 3$ then we
%consider the affine curve defined by the polynomial $f$ from the proof of the foregoing lemma for $n=2$. Let $X$ be its projective closure, which has an extra height $1$ point at infinity.
%With
%$B = \lfloor(d-1)/2 \rfloor - \lfloor(d-1)/4 \rfloor$ one observes that
%\[ N(X, B) \geq
%(\lfloor(d-1)/2 \rfloor + 1)^2 + 1 \geq d^2 / 4 = d^2/5 \cdot 5/4 \geq d^2/5 \cdot B^{2/d}. \qedhere \]
%\end{proof}
%Note that using the same $f$ and $B$ one also finds that
%\[ N_{\rm aff}(f,B) \geq (\lfloor(d-1)/2 \rfloor + 1)^2 \geq \frac{d^2}{4 \log d} B^{1/d} \log B \]
%for all $d \geq 3$, confirming our claim that, in the statement of Theorem~\ref{thm:Bombieri-Pila}, it is impossible to replace the quartic dependence on $d$ by any expression which is $o(d^2/ \log d)$. In arbitrary dimension, the same reasoning
%shows that there exists a positive constant $c = c(n)$ such that for all integers $d>0$ we can find an absolutely irreducible degree $d$ polynomial $f \in \Q[x_1, x_2, \ldots, x_n]$ along with an integer $B\geq 1$ such that
%$$
%  N_{\rm aff}(f,B) \geq cd^2B^{n-2} \quad \text{and} \quad N(X,B) \geq c d B^{\dim X},
%$$
%where $X \subseteq \P^n_\Q$ denotes the integral degree $d$ hypersurface defined by the homogenization of $f$.
%This shows that Theorems~\ref{thm:dcgdegree} and~\ref{thm:0.4} cannot hold with $e < 1$ resp.\ $e < 2$.

\bibliographystyle{plain} \bibliography{nrefs}
\end{document}